\numberwithin{equation}{section}
\newtheorem{theorem}{Theorem}[section]
\newtheorem{prop}[theorem]{Proposition}
\newtheorem{lemma}[theorem]{Lemma}
\begin{document}

\title[Stable Higgs bundles on positive elliptic fibrations]{Stable Higgs bundles over 
positive principal elliptic fibrations}

\author{Indranil Biswas}

\address{School of Mathematics, Tata Institute of Fundamental Research, Homi 
Bhabha Road, Mumbai 400005, India}

\email{indranil@math.tifr.res.in}

\author{Mahan Mj}

\address{School of Mathematics, Tata Institute of Fundamental Research, Homi 
Bhabha Road, Mumbai 400005, India}

\email{mahan@math.tifr.res.in}

\author{Misha Verbitsky}

\address{Instituto Nacional de Matem\'atica Pura e
Aplicada (IMPA) Estrada Dona Castorina, 110,
Jardim Bot\^anico, CEP 22460-320,
Rio de Janeiro, RJ, Brasil, and, Laboratory of Algebraic Geometry, 
Faculty of Mathematics, National Research University HSE,
6 Usacheva street, Moscow, Russia}

\email{verbit@verbit.ru}

\subjclass[2000]{14P25, 57M05, 14F35, 20F65 (Primary); 57M50, 57M07, 20F67 (Secondary)}

\keywords{Principal elliptic fibration; Higgs bundle; preferred metric; Yang-Mills equation; positivity.}

\date{\today}

\thanks{The first and second authors acknowledge the support of their respective J. C. Bose Fellowships. The 
third author is partially supported by the Russian Academic Excellence Project '5-100'.}

\begin{abstract}
Let $M$ be a compact complex manifold of dimension at least three and $\Pi\, :\, M\,\longrightarrow\, X$ a 
positive principal elliptic fibration, where $X$ is a compact K\"ahler orbifold. Fix a preferred Hermitian 
metric on $M$. In \cite{V}, the third author proved that every stable vector bundle on $M$ is of the form 
$L\otimes \Pi^*B_0$, where $B_0$ is a stable vector bundle on $X$, and $L$ is a holomorphic line bundle on 
$M$. Here we prove that every stable Higgs bundle on $M$ is of the form
$(L\otimes \Pi^*B_0,\, \Pi^*\Phi_X)$, where  $(B_0,\, \Phi_X)$ is a stable Higgs bundle on $X$ and
$L$ is a holomorphic line bundle on $M$.
\end{abstract}

\maketitle

\tableofcontents

\section{Introduction}

Stable holomorphic vector bundles on compact K\"ahler manifolds and, more generally, on compact Hermitian 
manifolds are extensively studied. On of the basic results in this topic is that an indecomposable 
holomorphic vector bundle $E$ on a compact complex manifold $X$ equipped with a Gauduchon metric $g_X$ 
admits a Hermitian structure $h_E$ that solves the Hermitian--Einstein equation if and only if $E$ is stable 
\cite{UY}, \cite{Do2}, \cite{LY}, \cite{LT1}.

A principal elliptic fibration is a holomorphic principal bundle with an elliptic curve as the structure 
group. Let $(X,\, \omega_X)$ be a compact K\"ahler orbifold. A principal elliptic fibration $\Pi\, :\, 
M\,\longrightarrow\, X$ is called positive if the differential form $\Pi^*\omega_X$ is exact. For a positive 
principal elliptic fibration $(M,\, \Pi)$, the complex manifold $M$ is never K\"ahler. There is a notion of
preferred Hermitian metric on the total space $M$ of a positive
principal elliptic fibration $(M,\, \Pi)$ which depends on the
K\"ahler form $\omega_X$ defined on the base $X$ of the fibration (the definition of
a preferred Hermitian metric is recalled in Section \ref{se2.1}).

Let $(M,\, \Pi)$ be a positive principal elliptic fibration equipped with a preferred Hermitian metric, such 
that $\dim_{\mathbb C} M\, \geq\, 3$. In \cite{V}, the third author investigated the stable vector bundles 
on $M$. The main result of \cite{V} says that every stable vector bundle $E$ on $X$ is of the form $L\otimes 
\Pi^* B_0$, where $L$ is a line bundle on $M$ and $B_0$ is a stable vector bundle on $(X,\, \omega_X)$.

A very important topic in the study of holomorphic vector bundles is the Higgs bundles. An indecomposable 
Higgs bundle $(E,\,\theta)$ is stable if and only if the vector bundle $E$ admits a Hermitian structure that 
solves the Yang--Mills equation associated to $(E,\,\theta)$ \cite{Si1}, \cite{LT2}.

Our aim here is to extend the result of \cite{V} mentioned above to the context of Higgs bundles.
Note that for a stable Higgs bundle $(E,\,\theta)$, the underlying vector bundle $E$ need not be stable.
The techniques of \cite{V} nevertheless can be adapted to our situation (as we do in Section 
\ref{Higgs_pullbacks_Section_}, see Theorem \ref{bundlepullsback}) to prove the analogous statement for the 
holomorphic vector bundle underlying the Higgs field. It turns out that more is true: the Higgs field also 
is a pullback. The following is the main theorem of our paper (see Theorem \ref{Higgspullsback}):

\begin{theorem}\label{introthm}
Let $\Pi\, :\, M\,\longrightarrow\, X$ be a positive principal elliptic fibration with
$n \,=\, \dim M \,\geq\, 3$. Equip $M$ with a preferred Hermitian metric. Let $(E,
\, \Phi)$ be a
stable Higgs bundle on $M$. Then there is stable Higgs bundle
$(B_0,\, \Phi_X)$ on $X$ and a holomorphic line bundle $L$ over $M$ such that the Higgs bundle
$(L\otimes \Pi^*B_0,\, \Pi^*\Phi_X)$ on $M$ is isomorphic to $(E, \, \Phi)$.
\end{theorem}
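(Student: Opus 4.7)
The plan is to leverage Theorem~\ref{bundlepullsback} (which handles the underlying vector bundle via the preferred Hermitian metric) and then to show that under the resulting identification the Higgs field must also descend. First, by Theorem~\ref{bundlepullsback} one has an isomorphism $E \cong L \otimes \Pi^* B_0$ with $B_0$ stable on $X$; this gives $\operatorname{End}(E) \cong \Pi^* \operatorname{End}(B_0)$, so $\Phi \in H^0(M, \Pi^* \operatorname{End}(B_0) \otimes \Omega^1_M)$. I would decompose this along two independent axes: the splitting $\operatorname{End}(B_0) = \operatorname{End}_0(B_0) \oplus \mathcal{O}_X$ (traceless versus trace), and the relative cotangent sequence
\[
0 \,\longrightarrow\, \Pi^* \Omega^1_X \,\longrightarrow\, \Omega^1_M \,\longrightarrow\, \Omega^1_{M/X} \,\longrightarrow\, 0,
\]
where $\Omega^1_{M/X}$ is holomorphically trivial because $\Pi$ is a principal bundle (its vertical tangent bundle is trivialized by the Lie algebra of the structure group).

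For the traceless part $\Phi_0$ of $\Phi$, the projection into $H^0(M, \Pi^* \operatorname{End}_0(B_0) \otimes \Omega^1_{M/X}) \cong H^0(M, \Pi^* \operatorname{End}_0(B_0))$ is controlled by fiber-wise constancy: since $\Pi$ has compact connected elliptic fibers, the projection formula gives $\Pi_* \mathcal{O}_M = \mathcal{O}_X$, so $H^0(M, \Pi^* \operatorname{End}_0(B_0)) = H^0(X, \operatorname{End}_0(B_0))$. Simplicity of the stable bundle $B_0$ forces this to vanish, so $\Phi_0$ actually lies in $H^0(M, \Pi^*(\operatorname{End}_0(B_0) \otimes \Omega^1_X))$, and a second application of fiber-wise constancy identifies it with an element of $H^0(X, \operatorname{End}_0(B_0) \otimes \Omega^1_X)$, i.e.\ it is pulled back.

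The hard part will be the trace component $\operatorname{tr}\Phi \in H^0(M, \Omega^1_M)$, since scalar-valued 1-forms are unconstrained by the stability of $B_0$. To show that $\operatorname{tr}\Phi$ is pulled back from $X$, I would examine the long exact sequence of the cotangent sequence and prove that the connecting homomorphism $\mathbb{C} \cong H^0(M, \Omega^1_{M/X}) \to H^1(M, \Pi^* \Omega^1_X)$ sends the canonical generator to a nonzero class; equivalently, that the extension class of the cotangent sequence does not vanish. For a positive principal elliptic fibration this class can be computed from the canonical connection $\theta$ on the principal bundle, which satisfies $d\theta = \Pi^* \omega_X$, and turns out to be the image of the Dolbeault class $[\omega_X] \in H^{1,1}(X)$ under the Leray map $H^1(X, \Omega^1_X) \to H^1(M, \Pi^* \Omega^1_X)$. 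K\"ahlerness gives $[\omega_X] \neq 0$, and injectivity of the Leray map on this piece (for a smooth proper fibration) finishes the vanishing: $H^0(M, \Omega^1_M) = \Pi^* H^0(X, \Omega^1_X)$, so $\operatorname{tr}\Phi$ descends. Combining with the traceless analysis, $\Phi = \Pi^* \Phi_X$ for some $\Phi_X \in H^0(X, \operatorname{End}(B_0) \otimes \Omega^1_X)$.

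Finally, stability of $(B_0, \Phi_X)$ is a direct pullback argument: any $\Phi_X$-invariant coherent destabilizing subsheaf $F \subset B_0$ would yield a $\Phi$-invariant subsheaf $L \otimes \Pi^* F \subset E$ on $M$, and the slope comparison used in \cite{V} (respecting pullback under $\Pi$ with the preferred metric) would contradict the stability of $(E, \Phi)$.
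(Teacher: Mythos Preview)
There is a genuine gap in your treatment of the traceless part. You invoke Theorem~\ref{bundlepullsback} and write ``$E \cong L \otimes \Pi^* B_0$ with $B_0$ stable on $X$''; but Theorem~\ref{bundlepullsback} only asserts that $B_0$ is a holomorphic vector bundle on $X$, not that it is stable. (The original result of \cite{V} gives a stable $B_0$ only when $E$ itself is stable as a bundle, which need not hold for the underlying bundle of a stable Higgs pair.) Consequently your key step, that simplicity of $B_0$ forces $H^0(X,\operatorname{End}_0(B_0))=0$ and hence kills the vertical component of $\Phi_0$, is unjustified: $B_0$ could well have nonscalar endomorphisms.

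The paper avoids this by never appealing to properties of $B_0$ at this stage. Instead it observes that the integrability condition $\Phi\wedge\Phi=0$ forces the contraction $i_\eta\Phi\in H^0(M,\operatorname{End}(E))$ to \emph{commute with} $\Phi$; then stability of the Higgs bundle $(E,\Phi)$ (the actual hypothesis) implies $i_\eta\Phi=\lambda\cdot\operatorname{Id}_E$ for some $\lambda\in\mathbb{C}$. This reduces the whole question, traceless part included, to the scalar case, and then your own argument for $\operatorname{tr}\Phi$ (equivalently the paper's statement that positivity forces $H^0(M,\Omega^1_M)=\Pi^*H^0(X,\Omega^1_X)$) gives $\lambda=0$. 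Your cohomological computation of the extension class of the relative cotangent sequence and your final stability-by-pullback step are fine and match the paper; the fix is simply to replace the appeal to simplicity of $B_0$ by the $\Phi\wedge\Phi=0$ / Higgs-stability argument for $i_\eta\Phi$.
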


In \cite{BM}, the first two authors had proved an analog of Theorem \ref{introthm} for Sasakian manifolds 
where the fibers are {\it real} one dimensional instead of complex one-dimensional as in the situation here. 
The present paper thus gives a natural context where the ideas of \cite{V} and \cite{BM} come together.

\section{Preliminaries}

\subsection{Positive principal elliptic fibrations}\label{se2.1}

Let $T$ be a complex smooth elliptic curve with a point $0\, \in\, T$ which is
the identity element for the group structure on $T$. Let $(X,\, \omega_X)$ be a
compact K\"ahler orbifold with $\dim_{\mathbb C} X\, \geq\, 2$; the
K\"ahler form on $X$ is denoted by $\omega_X$. Take a pair $(M,\, \Pi)$, where
\begin{itemize}
\item $M$ is a compact complex manifold equipped with a holomorphic action of $T$
on the right of it, and

\item
\begin{equation}\label{Pi}
\Pi\, :\, M\, \longrightarrow\, X
\end{equation}
is a holomorphic surjective submersion
that commutes with the trivial action of $T$ on $X$; moreover, the
projection $\Pi$ and the action of $G$ on $M$ together make $M$ a holomorphic
principal $T$--bundle over the orbifold $X$.
\end{itemize}

Triples $(X,\, M,\, \Pi)$ of the above type are called \textit{principal $T$-fibrations},
and when $T$ is not fixed beforehand, quadruples $(T,\, X,\, M,\, \Pi)$
of the above type are called \textit{principal elliptic fibrations}; see
\cite[p.~251]{V}.

Note that the action of $T$ on $M$ is free only when the orbifold structure of
$X$ trivial (meaning $X$ is a K\"ahler manifold). If $X$ is a nontrivial 
orbifold, then the action of $T$ on $M$ can have finite isotropies on a closed analytic
subspace of $M$. 

We assume that the fibration $\Pi$ in \eqref{Pi} is \textit{positive}, meaning the 
$(1,1)$-form $\Pi^*\omega_X$ on $M$ is exact (see \cite[p.~253, 
Definition~2.1]{V}).

It is known that if the total space of an principal elliptic fibration over a
compact K\"ahler manifold is K\"ahler, then the principal elliptic fibration is
in fact isotrivial (pullback to some finite cover of base
is a trivial fibration). On the other hand, given a trivial fibration and a differential
form $\alpha$ on the base, if the pullback of $\alpha$ to the total space is exact,
then $\alpha$ is exact. Therefore, the positivity of $\Pi$ implies that $M$ is not
a K\"ahler manifold.

We note that an important class of positive principal elliptic fibrations is
quasi-regular Vaisman manifolds \cite[p.~253, Example~2.2]{V}, \cite{DO}, \cite{OV}.

Take a positive principal $T$-fibration $(X,\, M,\, \Pi)$. A Hermitian metric $g$ on
$M$ is called \textit{preferred} if
\begin{itemize}
\item $g$ is preserved by the action of $T$ on $M$, and

\item the map $\Pi$ from $(M,\, g)$ to $(X,\, \omega_X)$ is a Riemannian submersion,
meaning the restriction of the differential $d\Pi\, :\, TM\, \longrightarrow\, TX$ of $\Pi$
to the orthogonal complement $\text{kernel}(d\Pi)^\perp\, \subset\, TM$ is an isometry.
\end{itemize}
(See \cite[p.~254, Definition~2.6]{V}.)

A Hermitian metric on $g_Z$ on a complex manifold $Z$ of dimension $\delta$ is
called {\it Gauduchon} if the corresponding $(1,1)$-form $\omega_Z$ satisfies the
equation
$$
\partial\overline{\partial} \omega^{\delta-1}_Z\,=\, 0
$$
\cite{Ga}.

It is known that a preferred metric on a positive principal $T$-fibration is
Gauduchon \cite[p.~257, Proposition~4.1]{V}.

\subsection{Yang--Mills connections}

Let $Y$ be a compact complex manifold of dimension
$\delta$. A {\it Higgs field} on a holomorphic vector bundle $V$ over
$Y$ is a holomorphic section
$$
\Phi\, \in\, H^0(Y,\, \text{End}(V)\otimes \Omega^1_Y)
$$
such that the section $\Phi\wedge\Phi$ of $\text{End}(V)\otimes \Omega^2_Y$ vanishes
identically \cite{Si1}, \cite{Si2}. A {\it Higgs bundle} on $Y$ is a holomorphic vector bundle
on $Y$ equipped with a Higgs field.

Fix a Gauduchon metric $g_Y$ on $Y$; let $\omega_Y$ be the corresponding $(1,1)$-form.
The degree of a sheaf on $Y$ will be defined using $\omega_Y$. For a torsion-free coherent
analytic sheaf $F$ on $Y$, the real number $\text{degree}(F)/\text{rank}(F)$, which is
called the slope of $F$, is denoted by $\mu(F)$.

A Higgs bundle $(V,\, \Phi)$ is called \textit{stable} (respectively, \textit{semistable})
if for every coherent analytic subsheaf $F\, \subsetneq\, V$ such that $V/F$ is torsion-free,
and $\Phi(F)\, \subset\, F\otimes \Omega^1_Y$, the inequality
$$
\mu(F)\, <\, \mu(V)\ \ \text{(respectively,\ $\mu(F)\, \leq\, \mu(V)$)}
$$
holds. A Higgs bundle $(V,\, \Phi)$ is called \textit{polystable} if it is semistable and
isomorphic to a direct sum of stable Higgs bundles.

Let $V$ be a Hermitian holomorphic vector bundle over $Y$. The Chern connection on $V$
will be denoted as $\nabla$. Let
$$\Theta\,\in \,C^\infty(Y;\, {\rm End}(V)\otimes\Omega^{1,1}_Y)$$
be the curvature of $\nabla$. Define the
operator $$\Lambda\,:\; {\rm End}(V)\otimes\Omega^{1,1}_Y\,\longrightarrow\, {\rm End}(V)$$
to be the Hermitian adjoint of the multiplication operator $b \,\longmapsto\, b\otimes\omega$. 
Let $\Phi$ be a Higgs field on $V$.
The connection $\nabla$ is called  {\bf Yang-Mills} if $$\Lambda
(\Theta + [\Phi,\, \Phi^*])\,=\, c \cdot {\rm Id}_V$$
for some $c\, \in\,\mathbb C$ \cite{Si1}, \cite{Si2}.

It is known that given a Higgs bundle $(V,\, \Phi)$, the vector bundle $V$ admits a
Hermitian structure that satisfies the Yang-Mills equation for $(V,\, \Phi)$, if and only
if $(V,\, \Phi)$ is polystable \cite{LT2}, \cite{LT1}, \cite{Si1}.

\section{Vector bundles underlying stable Higgs bundles are pullbacks}
\label{Higgs_pullbacks_Section_}

As before, $(X,\, \omega_X)$ is a compact K\"ahler orbifold, and $T$ is an elliptic
curve with a base point.
Let $M$ be a compact complex manifold of dimension $n$, with $n\, \geq\, 3$.
Let
\begin{equation}\label{e1}
\Pi\, :\, M\, \longrightarrow\, X
\end{equation}
be a positive principal $T$-fibration. Let
\begin{equation}\label{tv}
T_v\, \subset\, TM
\end{equation}
be the line subbundle given by the action of $T$ on $M$. So $T_v$ is the
vertical tangent bundle for the projection $\Pi$.

Fix a preferred Hermitian metric $g$ on $M$. 
Let $\omega$ be the $(1,\, 1)$--form on $M$ associated to
$g$. This form $\omega$ is not closed, because $g$ is not K\"ahler \cite{Ho}.

The orthogonal complement $T^\perp _v$
will be denoted by $T_h$. We have
an orthogonal decomposition
$$
\omega\, =\, \omega_v+\omega_h\, ,
$$
where $\omega_v$ vanishes on $T_h$ and $\omega_h$ vanishes on $T_v$.

For a point $y\, \in\, M$, choose an orthonormal basis $\theta_0,\, \theta_1,\,
\cdots ,\,  \theta_{n-1}$ of 
$(T^{1,0}_yM)^*$, such that $\theta_0\,\in\,\omega_v(y)$ and $\theta_j\,\in\,
\omega_h(y)$ for $1\, \leq\, j\,\geq\ n-1$. Then as in \cite{V},
\begin{itemize}
\item $\omega \,=\, \sqrt{-1} \sum_{j\geq 0}\theta_j \wedge \overline{\theta_j}$, and
\item $\Pi^*\omega_X \,= \,\sqrt{-1} \sum_{j\geq 1}\theta_j \wedge \overline{\theta_j}$.
\end{itemize}

Let $E$ be a Hermitian holomorphic vector bundle on $M$. Let $\nabla$ be the Chern
connection on $E$; the curvature of $\nabla$ will be denoted by $\Theta$.
Note that $\Theta\,\in\, C^\infty(M,\, \Omega^{1,1}_M\otimes \text{End}(E))$
such that $\nabla(\Theta)\,=\, 0$ (Bianchi identity), and $\Theta^*\,=\, \Theta$;
we recall that $(A\otimes \alpha)^*\,=\, A^*\otimes\overline{\alpha}$ for any $A\, \in\,
\text{End}(E_y)$ and $\alpha\, \in\, \Omega^{p,q}(M)_y$.
Therefore, $\Theta(y)$ uniquely decomposes as
$$\Theta(y) \,=\, \sum_{i\neq j}(\theta_i \wedge \overline{\theta_j}
+ {\theta_j} \wedge \overline{\theta_i}) \otimes b_{ij}  +
\sum_{i}(\theta_i \wedge \overline{\theta_i} ) \otimes a_i\, , $$
where $a_i,\, b_{ij}\,\in\, \text{End}(E_y)$ with $a^*_i\,=\, -a_i$ and
$b^*_{ij}\,=\, -b_{ij}$.

Let $$\Xi\,:=\, Tr(\Theta\wedge \Theta)\, .$$ Then $\Xi$ is a closed
$(2,2)$ form which can be expressed as 
$$
  \sqrt{-1}^n\Xi \wedge \omega^{n-2} \,=\,
  Tr(-\sum_{j\geq 1} b_{0j}^2 +a_0 \sum_{j\geq 1} a_j)\,.
$$ 

We recall that a Higgs field on $E$ is a holomorphic section $\Phi$ of
$\text{End}(E)\otimes\Omega^{1,0}_M$ such that $\Phi\wedge\Phi\, =\, 0$. A
Hermitian structure on $E$ is said to satisfy the Yang-Mills equation for
$(E,\, \Phi)$, if the Chern connection $\nabla$ on $E$ satisfies the equation
\begin{equation}\label{hym}
\Lambda_\omega (\Theta + \Phi \wedge {\overline {\Phi}}) \,=\, 0\, ,
\end{equation}
where $\Theta$ as before denotes the curvature of $\nabla$.

Let $\Phi$ be a Higgs field on $E$ equipped with a Hermitian structure
that satisfies the Yang-Mills equation for $(E,\, \Phi)$ in \eqref{hym}. As before,
the corresponding Chern connection will be denoted by $\nabla$ and the
curvature of $\nabla$ by $\Theta$.
Consider the connection $\nabla^\Phi\,=\, \nabla+\Phi + {\Phi}^*$ on $E$.
Then the curvature $\Theta^\Phi$ of $\nabla^\phi$ is given by
(see Lemmas 3.2 and 3.4 of \cite{Si1} for instance)
\begin{equation}\label{TP}
\Theta_\Phi\,=\,\Theta + \Phi \wedge {\Phi}^* + \nabla^{(1,0)} \Phi + 
\nabla^{(0,1)} ({\Phi}^*)\, .
\end{equation}
We will denote $\nabla^{1,0} \Phi$ and $\nabla^{0,1}({\Phi}^*)$
by $\alpha$ and $\beta$ respectively. Let
$$\Xi^\Phi \,:=\, Tr(\Theta^\Phi\wedge \Theta^\Phi)\, .$$
We note that $\Xi^\Phi$ is cohomologous to the discriminant of the bundle $E$,
that is, the characteristic class represented by the trace of the
square of its curvature, and hence
the cohomology class of $\Xi_\Phi$ vanishes.
But  $$\Xi_\Phi \,=\, \Xi + Tr (\Phi \wedge {\Phi}^* \wedge \Phi
\wedge {\Phi}^*) + 2\cdot Tr (\alpha \wedge \beta)\, ,$$  and the
signs of all three terms on the right agree with each other.

But, $\sum_j a_j \,=\, \Lambda \Theta$; hence $\sum_j a_j + \Phi \wedge{\Phi}^*
\,=\, \Lambda \Theta_\Phi \,=\, 0$. Consequently,
\begin{center}
$i^n\Xi_\Phi \wedge \omega^{n-2} \,=\, Tr(-\sum b_i^2 -a_0^2 )$\ PLUS non-negative terms.
\end{center}

Since  $$\int_M i^n\Xi \wedge \omega^{n-2}\,=\,
\int_M i^n\Xi_\Phi \wedge \omega^{n-2}\,=\,0\, ,$$ and $Tr(-a_0^2)$ is a
positive definite form on $\mathfrak{u}(E)$ (the space of
element $A$ of $\text{End}(E)$ such that $A^*\,=\, -A$), it follows that each
$b_i$ is zero as is $a_0$.

We now have now the following analog of \cite[Proposition 4.2]{V}.

\begin{prop}\label{vert}
Let $\Pi\, :\, M\,\longrightarrow\, X$ be a positive principal elliptic fibration 
with $n \,=\, \dim M \,\geq\, 3$. Equip $M$ with a preferred Hermitian metric. Let $(E,
\, \Phi)$ be a Higgs bundle with a Hermitian structure that satisfies \eqref{hym}.
Higgs field $\Phi$ and let $\Theta_\Phi 
\,\in\, \Lambda^{1,1}(M) \otimes {\rm End}(E)$ be the associated endomorphism-valued
(1,1)-form in \eqref{TP}. Then $$\Theta_\Phi(v, -) \,=\,0$$
for any vertical tangent vector $v\,\in\, T_v M$.
\end{prop}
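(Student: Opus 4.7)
The plan is to read off Proposition~\ref{vert} directly from the pointwise identity developed in the paragraph immediately before its statement. First I would fix a point $y\in M$ and work in the adapted orthonormal coframe $\theta_0,\,\theta_1,\,\ldots,\,\theta_{n-1}$ in which $\theta_0$ spans the vertical $(1,0)$-cotangent line at $y$ and $\theta_1,\ldots,\theta_{n-1}$ span the horizontal one. In this coframe the condition $\Theta_\Phi(v,-)=0$ for every vertical $v\in T_v M$ translates to the pointwise vanishing of every coefficient in the expansion of $\Theta_\Phi$ that carries a $\theta_0$- or $\overline{\theta_0}$-factor.

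Next, I would invoke the pointwise formula
$$
i^n\,\Xi_\Phi\wedge\omega^{n-2} \,=\, -Tr\bigl(a_0^2 + \textstyle\sum_{j\geq 1}b_{0j}^2\bigr)\,\omega^n/n! \,+\, N,
$$
obtained just above after using the Yang--Mills identity $\sum_j a_j = -\Lambda(\Phi\wedge\Phi^*)$ to absorb the $Tr(a_0\sum_j a_j)$ cross term; here $N\geq 0$ pointwise collects the contributions of $Tr(\Phi\wedge\Phi^*\wedge\Phi\wedge\Phi^*)\wedge\omega^{n-2}$ and of $2\,Tr(\alpha\wedge\beta)\wedge\omega^{n-2}=2\,Tr(\alpha\wedge\alpha^*)\wedge\omega^{n-2}$. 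Combined with the integral identity $\int_M i^n\,\Xi_\Phi\wedge\omega^{n-2}=0$ (already noted from the vanishing of the cohomology class of $\Xi_\Phi$) and with the positive-definiteness of $-Tr(\cdot^2)$ on $\mathfrak{u}(E_y)$, pointwise non-negativity of the whole integrand forces simultaneous pointwise vanishing of $a_0$, of every $b_{0j}$ with $j\geq 1$, and of each of the separate summands constituting $N$.

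The first two vanishings say that the $\theta_0$-indexed coefficients of the $(1,1)$-part $\Theta$ all vanish. The vanishing of the $N$-summands, unpacked in the coframe, analogously forces the $\theta_0$-leg of $\Phi$ and the vertical leg of $\alpha=\nabla^{(1,0)}\Phi$ (hence also of $\beta=\alpha^*$) to vanish. Summing the four contributions to $\Theta_\Phi$ appearing in \eqref{TP} then yields that every $\theta_0$- or $\overline{\theta_0}$-indexed coefficient of $\Theta_\Phi$ vanishes, which is precisely the statement $\Theta_\Phi(v,-)=0$ for every vertical $v$.

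The main obstacle I expect lies in this last unpacking: one must verify by an explicit coframe-level computation that the pointwise vanishing of $Tr(\Phi\wedge\Phi^*\wedge\Phi\wedge\Phi^*)\wedge\omega^{n-2}$ and of $Tr(\alpha\wedge\alpha^*)\wedge\omega^{n-2}$ really does isolate the vertical legs of $\Phi$ and $\alpha$ (rather than merely giving a relation among all components). This reduces to a short but careful linear-algebra check with the Hermitian adjoint structure on endomorphism-valued $(p,q)$-forms paired with $\omega^{n-2}$; once that is in place, Proposition~\ref{vert} follows as an immediate reading of the integral identity.
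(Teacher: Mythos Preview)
Your plan is exactly the paper's own argument: the ``proof'' of the proposition is nothing but the discussion immediately preceding it, and you have reproduced that discussion---set up the adapted coframe, express $i^n\Xi_\Phi\wedge\omega^{n-2}$ as a sum of non-negative terms using the Yang--Mills identity, integrate, and read off $a_0=0$ and $b_{0j}=0$ for $j\geq 1$. So far the approaches coincide.

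Where you go beyond the paper is in your final step, and your caution there is well placed. The paper stops after concluding $a_0=0$ and $b_{0j}=0$; this yields $\Theta(v,-)=0$, which is precisely what is required to feed into the argument of \cite[Theorem~6.1]{V} and obtain Theorem~\ref{bundlepullsback}. The paper does not unpack the remaining non-negative summands. For the $\alpha$-term your expectation is in fact borne out---$Tr(\alpha\wedge\alpha^*)\wedge\omega^{n-2}$ is a sum of squared norms $\sum_{j<k}Tr(\alpha_{jk}\alpha_{jk}^*)$, so its vanishing forces $\alpha=0$ outright. The quartic $\Phi$-term, however, does \emph{not} isolate the vertical leg: if $\Phi=\phi_0\theta_0$ is purely vertical then $(\Phi\wedge\Phi^*)^2=0$ identically, so its vanishing tells you nothing about $\phi_0$. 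Thus the linear-algebra check you anticipate fails for this piece, and the pointwise identity alone does not give the full $\Theta_\Phi(v,-)=0$. The vanishing of the vertical leg of $\Phi$ is instead obtained later, in Lemma~\ref{le1}, by a completely different argument (stability plus the fact that holomorphic $1$-forms on $M$ are pulled back from $X$).
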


Proposition \ref{vert} furnishes, as in \cite[Theorem 6.1]{V}, the following. Since 
the proof is a replica of \cite[Theorem 6.1]{V}, once Proposition \ref{vert} is in 
place, we omit it.

\begin{theorem}\label{bundlepullsback}
Let $\Pi\, :\, M\,\longrightarrow\, X$ be a positive principal elliptic fibration with
$n \,=\, \dim M \,\geq\, 3$. Equip $M$ with a preferred Hermitian metric. Let $E$ be a
holomorphic vector bundle on $M$ admitting a Higgs field
such that the resulting Higgs bundle is stable. Then $E\,\cong\, L\otimes \Pi^\ast B_0$, where $L$ is a
holomorphic line bundle on $M$, and $B_0$ is a holomorphic vector bundle on $X$.
\end{theorem}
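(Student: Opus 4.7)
The plan is to reduce Theorem~\ref{bundlepullsback} to the non-Higgs case already handled in \cite[Theorem~6.1]{V}. The hypothesis of that theorem is vertical flatness of the Chern connection, so the whole job here is to produce such a flatness statement for the Chern connection on $E$, starting from the stability of the Higgs pair $(E,\Phi)$.

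The first step is to equip $E$ with a Hermitian structure solving the Yang--Mills equation \eqref{hym}. Since $(E,\Phi)$ is stable, it is polystable, so the Simpson/Li--Tian correspondence \cite{Si1}, \cite{LT2} provides such a metric. With this metric in place, Proposition~\ref{vert} applies and gives $\Theta_\Phi(v,-)=0$ for every vertical $v\in T_vM$. What we actually need, however, is the analogous vanishing for the Chern curvature $\Theta$ itself, since the argument of \cite{V} is set up for Chern connections on Hermitian bundles. This is already contained in the analysis immediately preceding Proposition~\ref{vert}: the integral identity $\int_M i^n\Xi\wedge\omega^{n-2}=\int_M i^n\Xi_\Phi\wedge\omega^{n-2}=0$, combined with positive-definiteness of $\operatorname{Tr}(-a_0^2)$, forces $a_0=0$ and $b_{0j}=0$ for every $j\geq 1$. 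Reading off the decomposition of $\Theta$ in the basis $\theta_0,\theta_1,\dots,\theta_{n-1}$ (where $\theta_0$ is the vertical direction), these vanishings say precisely that no term in $\Theta$ involves $\theta_0$ or $\overline{\theta_0}$, i.e.\ the Chern curvature $\Theta$ itself vanishes on vertical tangent vectors. Consequently the Chern connection restricts to a flat unitary connection on each elliptic fiber $T_x=\Pi^{-1}(x)$.

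From this point one invokes \cite[Theorem~6.1]{V} verbatim. The restriction $E|_{T_x}$ is a polystable degree-zero holomorphic bundle on the elliptic curve $T_x$, hence by Atiyah's classification a direct sum of flat unitary line bundles. Because the preferred metric is $T$-invariant, vertical parallel transport is $T$-equivariant, and the multiset of characters of $\pi_1(T_x)$ appearing varies holomorphically with $x\in X$. Stability of $(E,\Phi)$ forces indecomposability of this character datum, so after tensoring with a single holomorphic line bundle $L\to M$ encoding the common character, the twist $E\otimes L^{-1}$ has trivial vertical monodromy and therefore descends along $\Pi$ to a holomorphic vector bundle $B_0\to X$ with $E\otimes L^{-1}\cong\Pi^*B_0$.

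The hard step, as in \cite{V}, is the construction of the global line bundle $L$ on $M$ realizing the vertical character: locally on $X$ the eigencharacter selects a flat line bundle on each fiber, and to assemble these into a global line bundle on $M$ one uses positivity of $\Pi$, which controls the cohomological obstructions on the total space. Since Proposition~\ref{vert} has already been established, the rest of the argument is a verbatim repetition of \cite[Theorem~6.1]{V} and is therefore omitted in the paper.
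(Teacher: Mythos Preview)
Your proposal is correct and follows the same route as the paper: obtain a Yang--Mills metric from stability, use the curvature computation preceding Proposition~\ref{vert} to get vertical flatness, and then invoke \cite[Theorem~6.1]{V}. The paper's own proof is literally the sentence ``the proof is a replica of \cite[Theorem~6.1]{V}, once Proposition~\ref{vert} is in place,'' so you have supplied strictly more detail. One useful point you make explicit, which the paper leaves implicit, is that the vanishing $a_0=0$, $b_{0j}=0$ yields vertical flatness of the Chern curvature $\Theta$ itself (not merely of $\Theta_\Phi$); this is exactly the input \cite[Theorem~6.1]{V} requires. Two small remarks: the correspondence you cite is Simpson/L\"ubke--Teleman, not ``Li--Tian''; and when you appeal to Higgs stability to rule out multiple vertical characters, note that one must check the isotypic subbundles are $\Phi$-invariant --- this holds because $\mathrm{Hom}(E_\chi,E_{\chi'})\otimes\Omega^1_M$ restricted to a fiber has no holomorphic sections when $\chi\neq\chi'$, but it is worth saying so rather than leaving it to ``indecomposability of the character datum.''
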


\section{Higgs field is a pullback}\label{higgspullback}

Consider the set-up of Theorem \ref{bundlepullsback}. Let $T$ denote
the elliptic curve that is acting on $M$. Take $E$ as in Theorem \ref{bundlepullsback}. Since $$E\,\cong\,
L\otimes \Pi^\ast B_0\, ,$$ we have
$$
\text{End}(E)\,\cong\, \Pi^*\text{End}(B_0)\, .
$$
Fix an isomorphism of $\text{End}(E)$ with $\Pi^*\text{End}(B_0)$.

The group $T$ has a tautological action on the pullback $\Pi^*\text{End}(B_0)$. 
Using the isomorphism of $\text{End}(E)$ with $\Pi^*\text{End}(B_0)$, this action of 
$T$ on $\Pi^*\text{End}(B_0)$ produces an action of $T$ on $\text{End}(E)$. On
the other hand, the action of $T$ on $M$ has a tautological lift to an action
of $T$ on the cotangent bundle $\Omega^1_M$. Combining the actions of $T$ on
$\text{End}(E)$ and $\Omega^1_M$, we get an action of $T$ on
$H^0(M,\, \text{End}(E)\otimes\Omega^1_M)$.

Since the group $T$ is compact, it does
not have any nontrivial holomorphic homomorphism to $\text{GL}(m, {\mathbb C})$ for any
$m\, \geq\,1$. Hence the action of $T$ on $H^0(M,\, \text{End}(E)\otimes\Omega^1_M)$
is trivial. In particular, every Higgs field on $E$ is fixed by the above action of $T$
on $H^0(M,\, \text{End}(E)\otimes\Omega^1_M)$.

Let
\begin{equation}\label{eta}
\eta\, \in\, H^0(M,\, TM)
\end{equation}
be the holomorphic vector field on $M$ given by a nonzero element
of $\text{Lie}(T)$ using the action of $T$ on $M$. So $\eta$ generates the line subbundle $T_v$ in \eqref{tv}.

Take a Higgs field $\Phi$ on $E$ such that the Higgs bundle $(E,\, \Phi)$ is stable.
Since $\Phi$ is a holomorphic section 
of $\text{End}(E)\otimes\Omega^1_M$, it follows that the contraction of $\Phi$ by the
vector field $\eta$ in \eqref{eta} is a holomorphic section
\begin{equation}\label{iP}
i_\eta \Phi\, \in\, H^0(M,\, \text{End}(E))\, .
\end{equation}

\begin{lemma}\label{le1}
The section $i_\eta\Phi$ in \eqref{iP} vanishes identically.
\end{lemma}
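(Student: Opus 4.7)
The plan is to apply Proposition \ref{vert} to the vertical vector field $\eta$ and then separate the resulting pointwise identity into its Hermitian and anti-Hermitian components. Since $(E,\, \Phi)$ is stable (hence polystable), the existence theorem quoted in the Preliminaries furnishes a Hermitian metric on $E$ that solves \eqref{hym}; with this metric fixed, Proposition \ref{vert} yields $\Theta_\Phi(\eta,\, w)\,=\,0$ in $\text{End}(E)$ for every tangent vector $w$ on $M$.

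Specializing $w\,=\,\overline{\eta}$, and using that the $(1,1)$-part of $\Theta_\Phi$ from \eqref{TP} is $\Theta + \Phi \wedge \Phi^*$ (the $(2,0)$ and $(0,2)$ pieces vanish automatically when paired with one $(1,0)$ and one $(0,1)$ vector), I would reduce the statement of Proposition \ref{vert} to the pointwise identity
$$
\Theta(\eta,\, \overline{\eta}) \,+\, (\Phi \wedge \Phi^*)(\eta,\, \overline{\eta})\,=\,0
$$
in $\text{End}(E_y)$ at every $y\,\in\, M$.

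The next step is to identify each summand with respect to the Hermitian structure on $\text{End}(E_y)$. Using the local orthonormal coframe $\theta_0,\, \ldots,\, \theta_{n-1}$ introduced before Proposition \ref{vert}, with $\theta_0$ vertical and $\theta_j$ horizontal for $j\,\geq\, 1$, only the diagonal $\theta_0 \wedge \overline{\theta_0}$ term contributes to the first summand, so $\Theta(\eta,\, \overline{\eta})\,=\,|\theta_0(\eta)|^2 a_0$; by construction $a_0^*\,=\,-a_0$, and this summand is anti-Hermitian. For the second summand, writing $\Phi$ locally as $\sum_j A_j \otimes \alpha_j$ with $\alpha_j$ of type $(1,0)$ and using the convention $(A \otimes \alpha)^*\,=\,A^* \otimes \overline{\alpha}$, a direct computation collapses everything to
$$
(\Phi \wedge \Phi^*)(\eta,\, \overline{\eta})\,=\,(i_\eta \Phi)(i_\eta \Phi)^*\, ,
$$
which is Hermitian and positive semi-definite.

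With this decomposition in hand I would split the previous displayed equation into its Hermitian and anti-Hermitian parts, forcing each to vanish independently; the Hermitian part delivers $(i_\eta \Phi)(i_\eta \Phi)^*\,=\,0$ at every point of $M$, and taking the trace recognizes this as the squared Hilbert--Schmidt norm of $i_\eta \Phi$, so $i_\eta \Phi\,=\,0$ pointwise, proving the lemma. The genuine content here is Proposition \ref{vert}, which has already been established; the only step worth verifying carefully is the pointwise identity $(\Phi \wedge \Phi^*)(\eta,\, \overline{\eta})\,=\,(i_\eta \Phi)(i_\eta \Phi)^*$, and this is essentially a bookkeeping exercise with the $*$-convention on endomorphism-valued forms.
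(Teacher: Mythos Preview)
Your argument is correct, but it is not the route the paper takes. The paper's proof is purely holomorphic/algebraic and never invokes Proposition~\ref{vert} or the Yang--Mills metric: from $\Phi\wedge\Phi=0$ one sees that $i_\eta\Phi$ commutes with $\Phi$, so simplicity of the stable Higgs bundle forces $i_\eta\Phi=\lambda\cdot\mathrm{Id}_E$; then $i_\eta(\mathrm{trace}\,\Phi)=r\lambda$, while positivity of the fibration implies every holomorphic $1$-form on $M$ is pulled back from $X$ and hence annihilated by $\eta$, giving $\lambda=0$. Your approach instead extracts the lemma directly from Proposition~\ref{vert} via the Hermitian/anti-Hermitian splitting of $\Theta(\eta,\overline{\eta})+(i_\eta\Phi)(i_\eta\Phi)^*=0$. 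This is a legitimate alternative: it avoids both the simplicity argument and the separate input that $H^0(M,\Omega^1_M)=\Pi^*H^0(X,\Omega^1_X)$, and in fact it goes through verbatim for polystable (not just stable) Higgs bundles. The cost is that you must have the analytic Proposition~\ref{vert} in hand, whereas the paper's proof of the lemma is self-contained modulo the cohomological fact about holomorphic $1$-forms; since Proposition~\ref{vert} is needed anyway for Theorem~\ref{bundlepullsback}, this is not a real drawback.
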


\begin{proof}
Since $\Phi\bigwedge\Phi\, =\, 0$, it follows immediately $\Phi$ commutes with
$i_\eta\Phi$. (We recall that the condition $\Phi\bigwedge\Phi\, =\, 0$ means that for
any point $y\, \in\, M$, and any $v,\, w\, \in\, T_y M$, the two elements
$i_v \Phi(y)$ and $i_w \Phi (y)$ of $\text{End}(E_y)$ commute.) Now, the stability condition
of the Higgs bundle $(E,\, \Phi)$ implies that there is a complex number $\lambda$
such that
$$
i_\eta\Phi\, =\, \lambda\cdot \text{Id}_E\, .
$$
To prove the lemma it suffices to show that $\lambda\, =\, 0$.

Consider the holomorphic $1$-form
$$
\text{trace}(\Phi)\, \in\, H^0(M,\, \Omega^1_M)\, .
$$
Evidently, we have
\begin{equation}\label{ti}
i_\eta (\text{trace}(\Phi))\,=\, \text{trace}(i_\eta\Phi)\,=\, n\cdot\lambda\, ,
\end{equation}
where $n\,=\, \dim_{\mathbb C} M$.

On the other hand, all holomorphic $1$-forms on $M$ are pulled back from $X$. Indeed,
this follows from the positivity of the principal elliptic fibration
$\Pi\, :\, M\,\longrightarrow\, X$. Consequently, for any holomorphic $1$-form $\beta$
on $M$, we have
\begin{equation}\label{bv}
i_\eta \beta\, =\, 0\, .
\end{equation}

Applying \eqref{bv} to $\text{trace}(\Phi)$, from \eqref{ti} we conclude that
$\lambda\, =\, 0$.
\end{proof}

Now consider $\Phi$ as a holomorphic section of $\text{End}(\Pi^*B_0)\otimes\Omega^1_M$
using the chosen isomorphism of $\text{End}(E)$ with $\Pi^*\text{End}(B_0)\,=\,
\Pi^*\text{End}(B_0)$. Since $\Phi$ is fixed by the action of $T$, from Lemma \ref{le1}
it follows immediately that the section
$$
\Phi\, \in\, H^0(M,\, \text{End}(\Pi^*B_0)\otimes\Omega^1_M)\,=\,
H^0(M,\, (\Pi^*\text{End}(B_0))\otimes\Omega^1_M)
$$
descends to $X$, meaning there is a section
$$
\Phi_X\, \in\, H^0(X,\, \text{End}(B_0)\otimes\Omega^1_X)
$$
such that
$$
\Pi^*\Phi_X\,=\, \Phi\, .
$$

Since $\Phi\wedge\Phi\,=\, 0$, and $\Pi$ is dominant, it follows immediately
that $\Phi_X\wedge\Phi_X\,=\, 0$. In other words, $(B_0,\, \Phi_X)$ is a Higgs
bundle on $X$. It is straight-forward to check that the Higgs bundle
$(B_0,\, \Phi_X)$ is stable with respect to $\omega_X$. Indeed, if a
subsheaf $F$ of $B_0$ violates the stability condition for $(B_0,\, \Phi_X)$,
then the subsheaf $L\otimes \Pi^* F$ of $L\otimes \Pi^*B_0\,=\, E$
violates the stability condition for $(E,\, \Phi)$.

Consequently, we have following generalization of Theorem \ref{bundlepullsback}:

\begin{theorem}\label{Higgspullsback}
Let $\Pi\, :\, M\,\longrightarrow\, X$ be a positive principal elliptic fibration with
$n \,=\, \dim M \,\geq\, 3$. Equip $M$ with a preferred Hermitian metric. Let $(E,
\, \Phi)$ be a
stable Higgs bundle on $M$. Then there is stable Higgs bundle
$(B_0,\, \Phi_X)$ on $X$ and a holomorphic line bundle $L$ over $M$ such that the Higgs bundle
$(L\otimes \Pi^*B_0,\, \Pi^*\Phi_X)$ is isomorphic to $(E, \, \Phi)$.
\end{theorem}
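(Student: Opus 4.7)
The plan is to invoke Theorem \ref{bundlepullsback} to reduce the problem to showing that the Higgs field $\Phi$ itself is a pullback. Applying Theorem \ref{bundlepullsback} to $(E,\Phi)$ yields an isomorphism $E \cong L \otimes \Pi^\ast B_0$, which induces a canonical identification $\text{End}(E) \cong \Pi^\ast \text{End}(B_0)$. Under this identification I can view $\Phi$ as a holomorphic section of $(\Pi^\ast \text{End}(B_0)) \otimes \Omega^1_M$, and the remaining task is to produce a holomorphic section $\Phi_X$ of $\text{End}(B_0) \otimes \Omega^1_X$ on the base whose pullback recovers $\Phi$.

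For the descent, I would exploit two separate properties of the $T$-action on $M$. First, combining the tautological action of $T$ on $\Pi^\ast \text{End}(B_0)$ with its natural lift to the cotangent bundle $\Omega^1_M$ produces a holomorphic $T$-action on $H^0(M, \text{End}(E) \otimes \Omega^1_M)$; since the compact complex group $T$ admits no nontrivial holomorphic representations on finite-dimensional complex vector spaces, this action is trivial, and hence $\Phi$ is $T$-invariant. Second, Lemma \ref{le1} supplies the needed vertical vanishing $i_\eta \Phi = 0$, where $\eta$ is a generator of the vertical foliation $T_v$. Together, $T$-invariance along the fibers and vanishing of the vertical contraction force $\Phi$ to be the pullback of a uniquely determined section $\Phi_X \in H^0(X, \text{End}(B_0)\otimes \Omega^1_X)$.

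Finally I would verify that $(B_0, \Phi_X)$ is a stable Higgs bundle on $(X,\omega_X)$. The integrability $\Phi_X \wedge \Phi_X = 0$ is immediate from $\Phi \wedge \Phi = 0$ combined with the fact that $\Pi$ is a submersion. For stability, any $\Phi_X$-invariant saturated subsheaf $F \subsetneq B_0$ with torsion-free quotient pulls back to a $\Phi$-invariant subsheaf $L \otimes \Pi^\ast F \subsetneq E$ with torsion-free quotient; since slopes behave compatibly under the Riemannian submersion $\Pi$ (because $\Pi^\ast \omega_X$ agrees with the horizontal part of $\omega$) and are shifted uniformly by tensoring with the line bundle $L$, a destabilizing $F$ would provide a destabilizing subsheaf of $(E,\Phi)$, contradicting our hypothesis. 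The substantive obstacle in the whole argument is the descent step for $\Phi$, and essentially all the work there is done by Lemma \ref{le1}: stability of $(E,\Phi)$ forces $i_\eta \Phi = \lambda \cdot \text{Id}_E$ for some scalar, and the positivity-driven fact that every holomorphic 1-form on $M$ is a pullback from $X$ then forces $\lambda = 0$ via the trace computation $i_\eta\text{trace}(\Phi) = n\lambda$. Once $\Phi$ is shown to descend, the transfer of stability from $M$ down to $X$ is routine.
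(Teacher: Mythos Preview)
Your proposal is correct and follows essentially the same route as the paper: invoke Theorem \ref{bundlepullsback} for the underlying bundle, use triviality of the $T$-action on $H^0(M,\text{End}(E)\otimes\Omega^1_M)$ together with Lemma \ref{le1} to descend $\Phi$ to $\Phi_X$, and then check integrability and stability by pulling back a would-be destabilizing subsheaf. The only cosmetic difference is that you spell out the slope comparison via the Riemannian-submersion property of $\Pi$, whereas the paper simply declares this step straightforward.
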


Note that Theorem \ref{Higgspullsback} justifies the definition of a Higgs bundle
on a quasi-regular Sasakian manifold adopted in \cite{BM}.

\end{document}